\newtheorem{theorem}{Theorem}
\newtheorem{example}[theorem]{Example}
\newtheorem{lemma}[theorem]{Lemma}
\newtheorem{proposition}[theorem]{Proposition}
\newtheorem{remark}[theorem]{Remark}
\newenvironment{proof}[1][Proof]{\textbf{#1.} }{\ \rule{0.5em}{0.5em}}
\newtheorem{hypothesis}[theorem]{Hypothesis}
\begin{document}

\title{Optimal Gaussian density estimates for a class of stochastic equations with additive noise}
\author{
  \\
  {David Nualart}    \thanks{Supported by the NSF grant DMS-0904538.}         \\
  {\small \it Department of Mathematics }  \\[-0.15cm]
  {\small \it University of Kansas}          \\[-0.15cm]
  {\small \it Lawrence, Kansas, 66045, USA}  \\[-0.15cm]
  { \small {\tt nualart@math.ku.edu}}
  \\[-0.1cm]
\and
\\
  {Llu\'{\i}s Quer-Sardanyons \thanks{Supported by the grant MEC-FEDER Ref. MTM20006-06427 from the
  Direcci\'on General de Investigaci\'on, Ministerio de Educaci\'on y Ciencia, Spain.}}              \\
  {\small\it Departament de Matem\`atiques} \\[-0.15cm]
  {\small\it  Universitat Aut\`onoma de Barcelona}          \\[-0.15cm]
  {\small\it 08193 Bellaterra (Barcelona), Spain}  \\[-0.15cm]
  {\small  {\tt quer@mat.uab.cat }} \\[-0.1cm]
  \\}

\maketitle

\begin{abstract}
In this note, we establish optimal lower and upper Gaussian bounds for the density of 
the solution to a class of stochastic integral equations driven by an additive 
spatially homogeneous Gaussian random field. The proof is based on
the techniques of the Malliavin calculus and a
density formula obtained by Nourdin and Viens in \cite{nv}. 
Then, the main result is applied to 
the mild solution of a general class of SPDEs driven by a Gaussian noise which is white in 
time and has a spatially homogeneous correlation. In particular, this covers the case of 
the stochastic heat and wave equations in $\mathbb{R}^d$ with $d\geq 1$ and $d\leq 3$, respectively.
The upper and lower Gaussian bounds have the same form and are given in terms of the 
variance of the stochastic integral term in the mild form of the equation.

\end{abstract}

\bigskip

\bigskip

\noindent Key words: Gaussian density estimates, Malliavin calculus, spatially homogeneous Gaussian noise, stochastic partial differential equations

\medskip

\noindent AMS Subject Classification: 60H07, 60H15

\newpage

\section{Introduction and main result}
Let $X=\{X(t,x), \; (t,x)\in [0,T]\times \mathbb{R}^{d}\}$ be a zero mean
Gaussian process with an homogeneous covariance function of the form%
\[
E(X(t,x)X(s,y))=\Phi (t,s,x-y),
\]%
where we assume that $t\mapsto \Phi(t,t,0)$ is continuous,  $\Phi (t,t,0)>0$ for any $t>0$, and \ $\Phi (t,0,x)=0$
for all $(t,x)\in [0,T]\times \mathbb{R}^d$.

The purpose of this note is to establish optimal Gaussian lower and upper bounds for the probability
density of the solution $u(t,x)$ to the following stochastic integral equation:
\begin{equation}
u(t,x)=X(t,x)+\int_{0}^{t}\int_{\mathbb{R}^{d}}b(u(s,x-y))\Gamma (t-s,dy)ds,
\label{2}
\end{equation}
where $(t,x)\in [0,T]\times \mathbb{R}^d$ and $\Gamma (t,dy)$ is a nonnegative measure such that
\begin{equation}
\sup_{0\leq t\leq
T}\Gamma (t,\mathbb{R}^{d})< +\infty.
\label{gamma}
\end{equation}
Note that if $b:\mathbb{R} \rightarrow \mathbb{R}$ is Lipschitz continuous, there
exists a unique solution of (\ref{2}) and the process $u$ is also homogeneous is the space variable.

In particular, these equations include mild solutions to a large class of SPDEs with additive noise of the form
\begin{equation}
L u(t,x) =  b(u(t,x))+ \sigma \dot{W}(t,x), \qquad (t,x)\in [0,T]\times \mathbb{R}^d,
\label{1}
\end{equation}
where $L$ denotes a second order differential operator with constant coefficients,
the random perturbation $\dot W(t,x)$ stands for a Gaussian noise which is white in time and has a spatially homogeneous correlation,
and $\sigma$ is constant. We assume here vanishing initial conditions. Let us denote by $\mu$ the spectral measure associated to the noise $\dot W$ 
(for the precise description of the noise, see Section \ref{appl-spde}). The process $X(t,x)$ is in this case
\[
X(t,x)= \sigma \int_0^t \int_{\mathbb{R}^{d}} \Gamma(t-s,x-y) W(ds,dy),
\]
where $\Gamma$ denotes the fundamental solution associated to $L$ and, if we simply denote $\Phi(t):=\Phi(t,t,0)$, then
\[
\Phi(t)=\int_0^t \int_{\mathbb{R}^{d}} |\mathcal{F} \Gamma(s)(\xi)|^2 \mu(d\xi) ds,
\]
This has to be a finite quantity for $X(t,x)$ to be well-defined. Let us point out that the main examples of
SPDEs of the form (\ref{1}) to which our results will apply are the stochastic heat equation in any space dimension and
the stochastic wave equation in $\mathbb{R}^d$ with $d\leq 3$. Indeed, these types of SPDEs have been widely studied
during the last two decades, see e.g. \cite{walsh,cn,MS,dalang,pz1,peszat}.

The main result of the paper is the following:
\begin{theorem}\label{gaussian}
Assume that $\Gamma$ satisfies (\ref{gamma}) and  $b$ is of class $\mathcal{C}^1$ with a bounded derivative.
Then, there exists $T_0>0$ such that for all $(t,x)\in (0,T_0)\times \mathbb{R}^d$, 
the random variable $u(t,x)$ solution to (\ref{2}) has a density $p$ satisfying, for almost
every $z\in \mathbb{R}$:
\[
\frac{E\left| u(t,x)-m\right|}{ C_2 \Phi(t)} \exp\left\{ -\frac{(z-m)^2}{C_1 \Phi(t) }
\right\}\leq p(z) \leq \frac{E\left| u(t,x)-m\right|}{ C_1 \Phi(t)} \exp\left\{ -\frac{(z-m)^2}{C_2 \Phi(t)}\right\},
\]
where $m=E(u(t,x))$ and the constants $C_1, C_2$ are positive and only depend on $b$  and $T_0$.
\end{theorem}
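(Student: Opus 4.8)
The plan is to apply the Nourdin–Viens density formula, which represents the density of a centered random variable $F = u(t,x) - m$ in the form
\[
p(z) = \frac{E|F|}{2 g(z)} \exp\left\{ -\int_0^z \frac{y\, dy}{g(y)} \right\},
\]
where $g(z) = E\big[ \langle D F, -D L^{-1} F \rangle_{\mathcal H} \,\big|\, F = z \big]$, $D$ is the Malliavin derivative, $L^{-1}$ the pseudo-inverse of the Ornstein–Uhlenbeck generator, and $\mathcal H$ the Gaussian Hilbert space associated to $X$ (equivalently to $\dot W$). Since the Gaussian upper and lower bounds in the theorem come precisely from a two-sided bound $c_1 \Phi(t) \le g(z) \le c_2 \Phi(t)$ plugged into this formula, the entire proof reduces to establishing such deterministic two-sided bounds on $g$, uniformly in $z$, for $t$ in a small enough interval $(0,T_0)$.

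First I would compute the Malliavin derivative of $u(t,x)$. Differentiating the integral equation \eqref{2} and using that the noise enters additively (so $D_{r,\cdot} X(t,x)$ is the deterministic kernel $\Gamma(t-r, \cdot - x)$, or rather its shifted version, with $\langle D X(t,x), D X(t,x)\rangle_{\mathcal H} = \Phi(t)$), one gets that $V := D u(t,x)$ solves a linear integral equation
\[
V_{r}(t,x) = D_r X(t,x) + \int_0^t \int_{\mathbb R^d} b'(u(s,x-y))\, V_r(s,x-y)\, \Gamma(t-s,dy)\, ds,
\]
so that $V_r(t,x) = D_r X(t,x) + (\text{a correction term bounded in terms of } \|b'\|_\infty \text{ and } \sup_t \Gamma(t,\mathbb R^d))$. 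The key structural point is that the leading term of $\langle D u(t,x), -DL^{-1} u(t,x)\rangle_{\mathcal H}$ is $\langle D X(t,x), D X(t,x)\rangle_{\mathcal H} = \Phi(t)$, and the correction terms are controlled by a constant times $t \cdot \Phi$-type quantities, hence negligible for small $t$. One then needs a similar representation for $-DL^{-1}u(t,x)$: writing $u(t,x) = \sum_n I_n(f_n)$ in Wiener chaos, $-DL^{-1}u(t,x) = \sum_n \frac{1}{n} D I_n(f_n)$, and the same linear-equation analysis, together with the commutation relations between $D$ and $L^{-1}$, shows that $-DL^{-1}u(t,x)$ also equals $D_r X(t,x)$ plus a term small in $t$. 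Taking the $\mathcal H$-inner product and then the conditional expectation given $F=z$, the constant term $\Phi(t)$ survives and the remainder is bounded in absolute value by $C\, t\, \Phi(t)$ (using $\sup_t\Gamma(t,\mathbb R^d) < \infty$ and $\|b'\|_\infty < \infty$, plus continuity/positivity of $\Phi$ near $0$). Choosing $T_0$ small enough that this remainder is at most, say, $\frac12 \Phi(t)$, yields $\frac12 \Phi(t) \le g(z) \le \frac32 \Phi(t)$.

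Finally I would substitute these bounds into the Nourdin–Viens formula: since $g(y)$ is squeezed between $c_1\Phi(t)$ and $c_2\Phi(t)$, the exponent $\int_0^z \frac{y\,dy}{g(y)}$ lies between $\frac{z^2}{2c_2\Phi(t)}$ and $\frac{z^2}{2c_1\Phi(t)}$, and likewise the prefactor $\frac{E|F|}{2g(z)}$ is pinned between $\frac{E|F|}{2c_2\Phi(t)}$ and $\frac{E|F|}{2c_1\Phi(t)}$; relabeling constants and writing $z-m$ in place of the centered variable gives exactly the claimed inequalities with $C_1, C_2$ depending only on $b$ and $T_0$.

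The main obstacle I expect is the rigorous control of the remainder term in $g(z)$ — in particular, obtaining the comparison $\langle D u, -DL^{-1}u\rangle_{\mathcal H} = \Phi(t) + O(t\Phi(t))$ requires handling $-DL^{-1}u$, which is not given by a clean integral equation the way $Du$ is; one must either work chaos-by-chaos with uniform estimates on the kernels $f_n$, or exploit an identity expressing $-DL^{-1}u$ through the derivative of $u$ along an independent copy of the noise (the Mehler-formula/interpolation approach used by Nourdin–Viens). Ensuring these estimates are uniform in $z$ (so that the conditioning on $F=z$ does no harm) and that all the $\mathcal H$-norms involved are finite — which is where the hypothesis $\sup_t \Gamma(t,\mathbb R^d) < \infty$ together with $\Phi(t) < \infty$ is used — is the technical heart of the argument.
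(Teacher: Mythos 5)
Your proposal follows essentially the same route as the paper: the Nourdin--Viens density formula combined with the Mehler-type representation of $g_F(F)$ through an independent copy of the noise (their Proposition 3.7), the linear integral equation for $Du(t,x)$, and the decomposition $g_F(F)=\Phi(t)+(\text{remainder})$ with the remainder of order $\Phi(t)\Psi(t)+\Phi(t)\Psi(t)^2$, where $\Psi(t)=\int_0^t\Gamma(s,\mathbb{R}^d)\,ds\to 0$, absorbed by taking $T_0$ small. The ``technical heart'' you flag --- uniformity of the conditional estimates --- is handled in the paper by a Gronwall-type argument applied directly to $Y_t=\sup_{0\le r\le t,\,y}E\left[\|Du(r,y)\|_{\mathcal H}\mid F\right]$, yielding the almost sure bound $C\sqrt{\Phi(t)}$ that makes each remainder term controllable.
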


In order to prove Theorem \ref{gaussian}, we will apply the techniques of the Malliavin calculus. Indeed, as it has
been also done in \cite{nq2}, we will make use of the recent results obtained by Nourdin and Viens in \cite{nv}.
In this paper, the authors provide sufficient conditions on a one-dimensional Wiener functional
(that is, on a random variable on the Wiener space) so that its law has a density and it is given by an explicit
formula (see \cite[Theorem 3.1 and Corollary 3.3]{nv}). As it will be made precise in the proof of Theorem \ref{gaussian}
(see Section \ref{main}), the application of the above-mentioned result of Nourdin and Viens will require a careful
analysis of the Malliavin derivative of the solution of equation (\ref{2}).

We should also remark that recently, there have been an increasing interest in applying the techniques of the Malliavin
calculus in order to establish Gaussian lower bounds for the probability density of a general class of Wiener
functionals. In particular, this has been applied to diffusion processes and solutions to SPDEs, and the article by
Kusuoka and Stroock \cite{ks} can be considered as the starting point of this methodology. Therein, a Gaussian type
lower bound for the density of a uniformly hypoelliptic diffusion whose drift is a smooth combination of its diffusion
coefficient was established. Later on, Kohatsu-Higa \cite{k2} got rid of that dependence constraint on the drift and,
moreover, took the ideas of Kusuoka and Stroock in order to construct a general method to prove that the density of a
multidimensional functional of the Wiener sheet in $[0,T]\times \mathbb{R}^d$ admits Gaussian lower bounds (see
\cite{k}). Then, in the latter paper, Kohatsu-Higa dealt with a one-dimensional stochastic heat equation in $[0,1]$
(for a related result, see also \cite[Theorem 3.1]{nq2}) and his result was also applied by Dalang and Nualart
\cite{dn} in the case of a one-dimensional reduced wave equation. The ideas of \cite{k} have also been further
developed by Bally \cite{bally} to obtain Gaussian lower bounds for locally elliptic It\^o processes. Eventually,
another recent  method for deriving Gaussian lower bounds for multidimensional Wiener functionals has been obtained by
Malliavin and Nualart \cite{mn} (see \cite{enu} for a one-dimensional version of this result). This method is based on
an exponential moment condition on the divergence of a covering vector field associated to the Wiener functional.

The paper is organised as follows. In Section \ref{pre}, we will briefly recall the main tools of the Malliavin
calculus needed in the proof of the main result. In particular, we will recall the main points of the method by Nourdin
and Viens \cite{nv}. Section \ref{main} is devoted to prove Theorem \ref{gaussian}. Finally, in Section \ref{appl-spde}, 
we apply Theorem \ref{gaussian} to the solution of a general spatially homogeneous SPDE.

\section{Preliminaries}
\label{pre}

In this section, we will briefly describe the Gaussian setting in which we will apply the techniques of the Malliavin
calculus as well as the method of Nourdin and Viens \cite{nv}. For a more complete account on these methodologies, we
refer the reader to \cite{nualart} and \cite[Sections 2 and 3]{nv}, respectively.

We denote by $\mathcal{H}$ the  Gaussian space generated by the Gaussian process $\{X(t,x), \; (t,x)\in [0,T]\times\mathbb{R}^d\}$; 
for a complete account on Gaussian Hilbert spaces, we refer the reader to \cite[Chapter 1]{janson}.

Then, as usual, we denote by $D$ the Malliavin derivative, defined as a closed and unbounded operator from
$L^2(\Omega)$ into $L^2(\Omega; \mathcal{H})$, whose domain is denoted by $\mathbb{D}^{1,2}$. Thus, for any  random
variable $F$ belonging to $\mathbb{D}^{1,2}$, its Malliavin derivative $DF$ defines an element in
$L^2(\Omega;\mathcal{H})$.

Another important operator in the Malliavin calculus theory that plays an important role in \cite{nv} is the  generator
of the Orstein-Uhlenbeck semigroup (see \cite[Section 1.4]{nualart}). It is usually denoted by $L$ and can be defined
by means of its Wiener chaos expansion:
$$L=\sum_{n=0}^\infty -n J_n,$$
where $J_n$ denotes the projection onto the $n$th Wiener chaos.

In \cite{nv}, the authors consider a random variable $F\in \mathbb{D}^{1,2}$ with mean zero and define
the following function in $\mathbb{R}$:
$$g_F(z):=E [ \langle DF,-DL^{-1} F\rangle_\mathcal{H} |F=z],$$
where $L^{-1}$ denotes the pseudo-inverse of the generator of the Orstein-Uhlenbeck semigroup $L$. We observe that, by
\cite[Proposition 3.9]{np}, it holds $g(z)\geq 0$ on the support of $F$.  Then, Nourdin and Viens prove the following
result (see \cite[Theorem 3.1 and Corollary 3.3]{nv}):
\begin{theorem}\label{nv}
Assume that there exists a positive constant $c_1$ such that
$$g_F(F)\geq c_1,\quad \text{a.s.}$$
Then, the law of $F$ has a density $\rho$ whose support is $\mathbb{R}$ and satisfies,
almost everywhere in $\mathbb{R}$:
\[
\rho(z)=\frac{E|F|}{2g_F(z)}\exp\left(-\int_0^z \frac{y}{g_F(y)}dy\right).
\]
\end{theorem}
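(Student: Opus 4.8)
The plan is to prove the result in four movements: a Stein-type integration-by-parts identity, the existence of a density via a Lipschitz-tail argument, the resolution of a first-order ODE for the density, and the identification of the normalising constant. Throughout I write $\delta$ for the divergence operator (the adjoint of $D$), and I use that for a centred $F\in\mathbb{D}^{1,2}$ one has $F=LL^{-1}F=-\delta DL^{-1}F=\delta(-DL^{-1}F)$.

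First I would establish the key identity
\[
E[F\phi(F)]=E[g_F(F)\,\phi'(F)]
\]
for every Lipschitz $\phi$. For $\phi\in\mathcal{C}^1$ with bounded derivative this follows by writing $E[F\phi(F)]=E[\phi(F)\,\delta(-DL^{-1}F)]$, applying the duality between $D$ and $\delta$ together with the chain rule $D\phi(F)=\phi'(F)DF$ to obtain $E[\phi'(F)\langle DF,-DL^{-1}F\rangle_{\mathcal H}]$, and then conditioning on $F$ to replace the inner product by $g_F(F)$. The extension to Lipschitz $\phi$ (in particular to primitives of indicators) is a routine approximation argument. Taking $\phi(x)=x$ already yields $E[g_F(F)]=\mathrm{Var}(F)<\infty$; combined with the hypothesis $g_F(F)\ge c_1>0$ this forces $\mathrm{Var}(F)>0$, so $F$ is nondegenerate and $E|F|>0$.

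Next comes what I expect to be the heart of the matter: showing that the law $P_F$ is absolutely continuous. Here I would introduce $G(z):=E\bigl[F(F-z)^+\bigr]$. Choosing $\phi(x)=(x-z)^+$ in the identity above gives $G(z)=E[g_F(F)\mathbf{1}_{\{F>z\}}]=\nu((z,\infty))$, where $\nu:=g_F\,dP_F$ is a finite nonnegative measure of total mass $\mathrm{Var}(F)$; thus $G$ is exactly the tail of $\nu$, so its distributional derivative is $-\nu$. On the other hand, the elementary bound $|(F-z_1)^+-(F-z_2)^+|\le|z_1-z_2|$ shows that $G$ is globally Lipschitz with constant $E|F|$, hence absolutely continuous. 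Comparing the two descriptions of $dG$ forces $\nu\ll dz$, i.e. $g_F\,dP_F\ll dz$; and since $g_F\ge c_1>0$ holds $P_F$-almost everywhere, this upgrades to $P_F\ll dz$. Writing $\rho$ for the density and $G'(z)=-E[F\mathbf{1}_{\{F>z\}}]$ for the (a.e.) derivative, I obtain the pivotal relation
\[
g_F(z)\,\rho(z)=E[F\mathbf{1}_{\{F>z\}}]=\int_z^\infty y\,\rho(y)\,dy=:R(z).
\]

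Finally I would solve for $\rho$. Since $R(z)=\int_z^\infty y\rho(y)\,dy$ we have $R'(z)=-z\rho(z)$ for a.e. $z$, while $R=g_F\rho$ gives $\rho=R/g_F$; substituting yields the linear ODE $R'(z)=-\,z\,R(z)/g_F(z)$, valid where $g_F$ is finite and positive. Integrating, $R(z)=R(0)\exp\!\left(-\int_0^z y/g_F(y)\,dy\right)$, and therefore $\rho(z)=R(z)/g_F(z)$ is the claimed formula once I identify $R(0)=E[F^+]=\tfrac12E|F|$ (using $E[F]=0$, so $E[F^+]=E[F^-]$). The lower bound $g_F\ge c_1$ makes $\int_0^z y/g_F(y)\,dy$ finite for every $z$, so $R$, and hence $\rho$, stays strictly positive on all of $\mathbb{R}$: were the support bounded above by some finite $b>0$, the relation $R(b)=\int_b^\infty y\rho=0$ would contradict the strictly positive value furnished by the ODE, and a finite lower endpoint is ruled out analogously. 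This gives $\mathrm{supp}(P_F)=\mathbb{R}$ and completes the proof. The main obstacle is the absolute-continuity step, where the right move is to recognise $E[F(F-z)^+]$ simultaneously as the Lipschitz tail of $\nu=g_F\,dP_F$; the only other delicate points are the approximation justifying the Stein identity for non-smooth $\phi$ and the measure-theoretic handling of $g_F$ being defined only $P_F$-almost everywhere.
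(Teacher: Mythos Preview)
The paper does not prove this theorem: it is quoted from Nourdin and Viens \cite{nv} (their Theorem~3.1 and Corollary~3.3) and used as a black box in the sequel, so there is no in-paper argument to compare yours against.

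Your proof is essentially the original Nourdin--Viens argument and is correct, with one point that deserves tightening. When you integrate the ODE $R'(z)=-zR(z)/g_F(z)$ from $0$ to $z$, you are implicitly assuming that $g_F$ is defined and positive almost everywhere on the whole segment between $0$ and $z$, i.e.\ that the support of $\rho$ has no gaps there. You acknowledge the ODE is ``valid where $g_F$ is finite and positive'' but then integrate across $[0,z]$ without justifying this. The fix is short: $R$ is continuous with $R(0)=E[F^{+}]>0$, so set $\beta=\inf\{z>0:R(z)=0\}\in(0,\infty]$; on $(0,\beta)$ the identity $g_F(z)\rho(z)=R(z)>0$ forces $\rho(z)>0$ a.e., hence the ODE holds there and integrates to $R(z)\ge R(0)\exp(-z^{2}/(2c_1))$, which by continuity of $R$ rules out $\beta<\infty$. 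The symmetric argument on the negative side then gives $\mathrm{supp}(P_F)=\mathbb{R}$, after which your formula for $\rho$ is fully justified.
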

As stated in \cite[Corollary 3.5]{nv}, an immediate consequence of the above theorem is that,
if one also has that $g_F(F)\leq c_2$, a.s., then the density $\rho$ satisfies, for almost all $z\in
\mathbb{R}$:
\[
\frac{E|F|}{2c_1}\exp\left(-\frac{z^2}{2c_2}\right)\leq
\rho(z)\leq \frac{E|F|}{2c_2}\exp\left(-\frac{z^2}{2c_1}\right).
\]
In order to deal with particular applications of this method, \cite[Proposition 3.7]{nv} establishes an alternative
formula for $g_F(F)$. Indeed, given a random variable  $F\in \mathbb{D}^{1,2}$, one can write $DF=\Phi_F(W)$, where
$\Phi_F$ is a measurable mapping from $\mathbb{R}^\mathcal{H}$ to $\mathcal{H}$, determined $(P\circ W^{-1})$-almost
surely (see \cite{nualart}, p. 54-55). Then, it holds that
\begin{equation}
g_F(F)=\int_0^\infty e^{-\theta} \textbf{E}\left[ \langle
\Phi_F(W),\Phi_F(e^{-\theta}W+\sqrt{1-e^{-2\theta}}W')\rangle_\mathcal{H}
\big| F\right] d\theta,
\label{23}
\end{equation}
where $W'$ stands for an independent copy of $W$ such that $W$ and $W'$ are defined on
the product probability space $(\Omega\times \Omega',\mathcal{F}\otimes \mathcal{F}',P\times P')$. Eventually,
$\textbf{E}$ denotes the mathematical expectation with respect to $P\times P'$.

Let us observe that formula (\ref{23}) can be still rewritten in the following form:
\[
g_F(F)= \int_0^\infty  e^{-\theta} E\left[ E'\left( \left\langle
DF,\widetilde{DF}\right\rangle_\mathcal{H}\right) \big| F \right] d\theta,
\]
where, for any random variable $X$ defined in
$(\Omega,\mathcal{F},P)$, $\widetilde{X}$ denotes the shifted random
variable in $\Omega\times \Omega'$
\[
\widetilde{X}(\omega,\omega')=X(e^{-\theta}\omega+
\sqrt{1-e^{-2\theta}}\omega'),\; \omega\in \Omega,\; \omega'\in
\Omega.
\]
Notice that, indeed, $\widetilde{X}$ depends on the
parameter $\theta$, but we have decided to drop its explicit
dependence for the sake of simplicity.

\section{Proof of the main result}
\label{main}

This section is devoted to prove Theorem \ref{gaussian}.
To begin with, we have the following result, whose proof is straightforward and omitted.

 \begin{proposition}
Suppose that $\Gamma$ satisfies condition (\ref{gamma}) and $b:\mathbb{R}\rightarrow \mathbb{R}$ is of class
$\mathcal{C}^{1}$ with a bounded derivative. Let $u(t,x)$ be the solution to Equation (\ref{2}). Then $u(t,x)$ belongs
to the space $\mathbb{D}^{1,2}$ and
\begin{equation}  \label{3}
Du(t,x)=X(t,x)+\int_{0}^{t}\int_{\mathbb{R}^{d}}b^{\prime
}(u(s,x-y))Du(s,x-y)\Gamma (t-s,dy)ds,
\end{equation}
a.s. for all $(t,x)\in (0,T]\times \mathbb{R}^d$.
  \end{proposition}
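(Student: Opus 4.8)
The natural route is a Picard iteration carried out directly in $\mathbb{D}^{1,2}$, combined with the closedness of the Malliavin derivative operator $D$. Set $u_0(t,x):=X(t,x)$ and, for $n\geq 0$,
\[
u_{n+1}(t,x):=X(t,x)+\int_0^t\!\!\int_{\mathbb{R}^d}b\bigl(u_n(s,x-y)\bigr)\,\Gamma(t-s,dy)\,ds .
\]
Since $b$ is Lipschitz, these approximations converge to $u(t,x)$ in $L^2(\Omega)$, uniformly in $(t,x)\in[0,T]\times\mathbb{R}^d$, and each $u_n(t,x)$ is homogeneous in the space variable. First I would show by induction that $u_n(t,x)\in\mathbb{D}^{1,2}$ for every $(t,x)$. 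The base case is immediate, $X(t,x)$ lying in the first Wiener chaos with $E\|DX(t,x)\|_{\mathcal H}^2=\Phi(t)$. For the inductive step, the chain rule for $D$ --- applicable because $b\in\mathcal{C}^1$ with bounded derivative --- gives $b(u_n(s,x-y))\in\mathbb{D}^{1,2}$ with $D[b(u_n(s,x-y))]=b'(u_n(s,x-y))\,Du_n(s,x-y)$; interchanging $D$ with the deterministic integral $\int_0^t\int_{\mathbb{R}^d}\,\cdot\;\Gamma(t-s,dy)\,ds$ against the finite measure $\Gamma$ (justified by the closedness of $D$ together with a Fubini argument for $\mathcal H$-valued Bochner integrals) then yields $u_{n+1}(t,x)\in\mathbb{D}^{1,2}$ and the recursion
\[
Du_{n+1}(t,x)=DX(t,x)+\int_0^t\!\!\int_{\mathbb{R}^d}b'\bigl(u_n(s,x-y)\bigr)\,Du_n(s,x-y)\,\Gamma(t-s,dy)\,ds .
\]

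Next I would derive uniform bounds on $\varphi_n(t):=\sup_{x}E\|Du_n(t,x)\|_{\mathcal H}^2$, which by spatial homogeneity is independent of $x$. Applying $(a+b)^2\leq 2a^2+2b^2$ and then the Cauchy--Schwarz inequality with respect to the nonnegative measure $\Gamma(t-s,dy)\,ds$ on $[0,t]\times\mathbb{R}^d$ --- whose total mass is at most $C_T:=T\sup_{0\leq s\leq T}\Gamma(s,\mathbb{R}^d)<\infty$ by (\ref{gamma}) --- together with $\|b'\|_\infty<\infty$, one gets
\[
\varphi_{n+1}(t)\leq 2\sup_{0\leq s\leq T}\Phi(s)+2C_T\|b'\|_\infty^2\int_0^t\Gamma(t-s,\mathbb{R}^d)\,\varphi_n(s)\,ds .
\]
Since $t\mapsto\Phi(t)=\Phi(t,t,0)$ is continuous on $[0,T]$, hence bounded, iterating this inequality (a Gronwall argument) gives $\sup_n\sup_{0\leq t\leq T}\varphi_n(t)<\infty$. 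The very same computation run in $L^p(\Omega;\mathcal H)$ for some exponent $p>2$ --- where Minkowski's integral inequality replaces Cauchy--Schwarz and there is no stochastic integral to estimate --- shows in addition that $\sup_n\sup_{t,x}E\|Du_n(t,x)\|_{\mathcal H}^p<\infty$.

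Finally I would pass to the limit. Since $u_n(t,x)\to u(t,x)$ in $L^2(\Omega)$ and $\sup_nE\bigl[|u_n(t,x)|^2+\|Du_n(t,x)\|_{\mathcal H}^2\bigr]<\infty$, the closedness of $D$ (see \cite[Lemma 1.2.3]{nualart}) implies $u(t,x)\in\mathbb{D}^{1,2}$ and $Du_n(t,x)\to Du(t,x)$ weakly in $L^2(\Omega;\mathcal H)$. To identify the limit with the right-hand side of (\ref{3}), I would take limits in the recursion: $b'(u_n(s,\cdot))\to b'(u(s,\cdot))$ in $L^q(\Omega)$ for every $q<\infty$ (by continuity and boundedness of $b'$ and dominated convergence), and combining this with the uniform $L^p$-bound on $Du_n$ and the weak convergence $Du_n\to Du$ one checks that $b'(u_n(s,x-y))\,Du_n(s,x-y)\to b'(u(s,x-y))\,Du(s,x-y)$ weakly in $L^2(\Omega;\mathcal H)$; integrating against the finite measure $\Gamma(t-s,dy)\,ds$ preserves this convergence and gives (\ref{3}). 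The only point really needing care --- and the reason the statement is merely ``straightforward'' rather than immediate --- is the systematic handling of $\mathcal H$-norms of spatial convolutions $\int_0^t\int_{\mathbb{R}^d}F(s,x-y)\,\Gamma(t-s,dy)\,ds$ when $\Gamma$ is only a finite measure: one estimates them via Jensen/Minkowski with respect to the (renormalized) measure and the bound (\ref{gamma}), and one uses the spatial homogeneity of $u$ so that the resulting Gronwall estimates close.
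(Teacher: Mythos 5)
Your argument is correct and is precisely the standard Picard-iteration-plus-closedness-of-$D$ proof that the authors have in mind when they call the proposition ``straightforward'' and omit the proof (it is the same scheme used in their references \cite{nq,nq2}): induction on the Picard approximations via the chain rule, uniform $\mathbb{D}^{1,2}$ bounds through the generalized Gronwall lemma, and identification of the limit by pairing the weak convergence of $Du_n$ with the strong convergence of $b'(u_n)$. No gaps; the delicate points you flag (interchanging $D$ with the $\mathcal H$-valued pathwise integral, and closing the Gronwall estimate using spatial homogeneity and condition (\ref{gamma})) are exactly the ones that need, and receive, attention.
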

Let us remark that the pathwise integral in (\ref{3}) takes values in the Hilbert space $\mathcal{H}$ and can be defined using a standard procedure
(see e.g. \cite[p. 292]{nq}).

  Owing to Theorem \ref{nv}, Theorem \ref{gaussian} will be a consequence of the following
proposition. For  $(t,x)\in (0,T]\times \mathbb{R}^d$ we set $F:=u(t,x)-E(u(t,x))$, so that we need to find almost sure upper and lower bounds
for the random variable $g_F(F)$:
\begin{equation}
g_F(F) =\int_0^\infty e^{-\theta} E\left[ E'\left( \langle
Du(t,x),\widetilde{Du(t,x)}\rangle_{\mathcal{H} }\right)\big| F\right] d\theta.
\label{99}
\end{equation}

\begin{proposition}\label{g}
Under the same hypothesis as in Theorem \ref{gaussian}, there exists $T_0>0$ such that, for all $t\in [0,T_0)$:
\[
C_1 \Phi(t)\leq g(F)\leq C_2 \Phi(t),\quad a.s.,
\]
where $C_1, C_2$ are positive constants depending on $b$  and $T_0$.
\end{proposition}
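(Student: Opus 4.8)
The plan is to estimate the inner product $\langle Du(t,x), \widetilde{Du(t,x)} \rangle_{\mathcal H}$ in \eqref{99} by exploiting the linear-equation structure of \eqref{3}. First I would iterate \eqref{3}: writing $Du(t,x) = X(t,x) + A u(t,x)$ where $A$ denotes the (random, bounded on $\mathcal H$) operator corresponding to convolution against $b'(u)\,\Gamma$, one gets a Neumann-type series $Du(t,x) = \sum_{n\ge 0} A^n X(t,x)$, or more usefully one isolates the leading term and writes $Du(t,x) = X(t,x) + R(t,x)$, where $R(t,x) = \int_0^t\int_{\mathbb R^d} b'(u(s,x-y)) Du(s,x-y)\,\Gamma(t-s,dy)\,ds$ is a ``remainder'' that is small for small $t$. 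The same decomposition applies to the shifted copy $\widetilde{Du(t,x)} = \widetilde{X(t,x)} + \widetilde{R(t,x)}$. Note that $\langle X(t,x), \widetilde{X(t,x)}\rangle_{\mathcal H}$ is deterministic: since $X$ lives in the first Wiener chaos, the Ornstein--Uhlenbeck shift scales it by $e^{-\theta}$ in the sense that $E'\langle X(t,x),\widetilde{X(t,x)}\rangle_{\mathcal H} = e^{-\theta}\|X(t,x)\|_{\mathcal H}^2 = e^{-\theta}\Phi(t)$, and then $\int_0^\infty e^{-\theta}\cdot e^{-\theta}\,d\theta\cdot\Phi(t) = \tfrac12\Phi(t)$. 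So the main term already produces a clean $\Phi(t)$, and everything rests on controlling the cross terms and the $\langle R,\widetilde R\rangle$ term.

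Next I would set up a Gronwall/fixed-point estimate in a suitable norm. Define $\varphi(t) := \sup_{x}\sup_{s\le t} E\big[\,\|Du(s,x)\|_{\mathcal H}^2\,\big]$ (finite and bounded by spatial homogeneity, after checking $u(t,x)\in\mathbb D^{1,2}$ with $\mathcal H$-valued derivative — already granted by the Proposition preceding this one). From \eqref{3}, using $\|b'\|_\infty < \infty$, the Cauchy--Schwarz inequality in the finite measure $\Gamma(t-s,dy)\,ds$ (whose total mass is controlled by \eqref{gamma}), and the triangle inequality in $\mathcal H$, one gets
\[
\varphi(t) \le 2\Phi(t) + C\int_0^t \varphi(s)\,ds,
\]
hence $\varphi(t) \le C'\Phi(t)$ on $[0,T]$ by Gronwall, where $C'$ depends only on $\|b'\|_\infty$, the mass bound \eqref{gamma}, and $T$; here I use that $\Phi$ is continuous, positive for $t>0$, hence bounded on $[0,T]$, and also monotone-ish enough that $\int_0^t\Phi(s)\,ds \lesssim t\,\Phi(t)$ for small $t$ (this last point needs the mild regularity of $\Phi$ near $0$; if it fails one can still absorb $\int_0^t\Phi$ into the constant on a bounded interval). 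The same bound holds for the shifted derivative since the OU shift is an $L^2$-isometry on each chaos and does not increase the relevant norms. With $\varphi(t)\le C'\Phi(t)$ in hand, every cross term and the $\langle R,\widetilde R\rangle$ term in $\langle Du,\widetilde{Du}\rangle$ is bounded in absolute value by a constant times $\big(\int_0^t \sqrt{\varphi(s)}\,ds\big)\cdot\big(\sqrt{\Phi(t)} + \int_0^t\sqrt{\varphi(s)}\,ds\big) \lesssim t\,\Phi(t)$ for $t$ small, using Cauchy--Schwarz once more.

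Putting it together: $E[E'(\langle Du(t,x),\widetilde{Du(t,x)}\rangle_{\mathcal H})\,|\,F] = e^{-\theta}\Phi(t) + (\text{error})$, where $|\text{error}| \le K(\theta)\, t\,\Phi(t)$ with $\int_0^\infty e^{-\theta}K(\theta)\,d\theta < \infty$; integrating in $\theta$ against $e^{-\theta}$ gives
\[
g_F(F) = \tfrac12\Phi(t) + O\big(t\,\Phi(t)\big), \quad \text{a.s.}
\]
Choosing $T_0$ small enough that the $O(t\,\Phi(t))$ term is bounded by $\tfrac14\Phi(t)$ in absolute value for $t\in[0,T_0)$ yields $\tfrac14\Phi(t) \le g_F(F) \le \tfrac34\Phi(t)$ a.s., which is the claim with $C_1 = \tfrac14$, $C_2 = \tfrac34$ (any explicit constants will do). Theorem \ref{gaussian} then follows by feeding $c_1 = C_1\Phi(t)$, $c_2 = C_2\Phi(t)$ into the two-sided bound quoted after Theorem \ref{nv}, with the $\tfrac12$-factors absorbed into $C_1,C_2$ and noting $E|F| = E|u(t,x)-m|$.

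The main obstacle I anticipate is not the Gronwall estimate itself but the careful handling of the Ornstein--Uhlenbeck shift inside the conditional expectation: one must justify that the deterministic identity $E'\langle X,\widetilde X\rangle_{\mathcal H} = e^{-\theta}\Phi(t)$ survives, that the shifted solution $\widetilde{u(s,x-y)}$ still solves a version of \eqref{2}/\eqref{3} (so that its derivative obeys the same kind of bound uniformly in $\theta$), and that all the interchanges of integration (the pathwise $\Gamma$-integral, the $\mathcal H$-inner product, $E$, $E'$, and $\int_0^\infty d\theta$) are legitimate — the conditioning on $F$ is harmless for upper/lower bounds since the bounds are deterministic, but the uniformity in $\theta$ of the remainder estimate is where the real bookkeeping lives.
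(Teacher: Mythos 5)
Your overall strategy coincides with the paper's: expand $\langle Du(t,x),\widetilde{Du(t,x)}\rangle_{\mathcal H}$ into a leading term coming from $X(t,x)$ plus two cross terms and a remainder--remainder term, show the latter three are $O(\Psi(t)\Phi(t))$ via a Gronwall-type bound on the Malliavin derivative, and shrink $T_0$. Two points, however, are genuinely wrong or incomplete.

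First, the leading term. You claim $E'\langle X(t,x),\widetilde{X(t,x)}\rangle_{\mathcal H}=e^{-\theta}\Phi(t)$ and hence a leading contribution $\int_0^\infty e^{-2\theta}d\theta\cdot\Phi(t)=\tfrac12\Phi(t)$. This is incorrect: in formula (\ref{23}) the shift is applied to the map $\Phi_F$ with $DF=\Phi_F(W)$, and the contribution of $X(t,x)$ to $Du(t,x)$ is the \emph{deterministic} element $X(t,x)\in\mathcal H$ (the Malliavin derivative of a first-chaos variable is a constant element of $\mathcal H$). A constant map is unaffected by the substitution $W\mapsto e^{-\theta}W+\sqrt{1-e^{-2\theta}}\,W'$, so the leading term is $\int_0^\infty e^{-\theta}\Phi(t)\,d\theta=\Phi(t)$, exactly as in the paper. (Sanity check: for $F=W(h)$ one must obtain $g_F(F)=\|h\|^2=\mathrm{Var}(F)$, not $\tfrac12\|h\|^2$, or the density formula of Theorem \ref{nv} would return the wrong Gaussian.) The $e^{-\theta}$ you have in mind is how the OU semigroup acts on the random variable $X(t,x)$, not on its derivative. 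Since the proposition only asserts two-sided bounds with unspecified constants, this slip does not destroy the argument, but the identity it rests on, and the constants $1/4$, $3/4$ you extract from it, are wrong.

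Second, and more substantively, your Gronwall input is the unconditional quantity $\varphi(t)=\sup E\big[\|Du(s,x)\|_{\mathcal H}^2\big]$, whereas $g_F(F)$ is a conditional expectation given $F$ and the required bounds must hold almost surely. For $Z\ge 0$, an estimate $E[Z]\le c$ does not imply $E[Z\mid F]\le c$ a.s., so your bound on $\varphi$ does not control the cross and remainder terms sitting inside $E[\,\cdot\mid F]$. The correct fix is precisely the paper's Lemma \ref{lemma}: run the same Minkowski-plus-Gronwall argument directly on $Y_t=\sup_{r\le t,\,y}E[\|Du(r,y)\|_{\mathcal H}\mid F]$ (and on its shifted analogue, uniformly in $\theta$), which works because the recursive inequality has deterministic coefficients and therefore survives the conditioning. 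Your closing remark that the conditioning on $F$ is harmless is true only once the bound has been established for the conditional expectation itself; as written, that step is missing from your argument.
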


For the proof of this proposition, we need the following lemma. Recall that $\Phi(t):=\Phi(t,t,0)=E(X(t,x)^2)$.

\begin{lemma}\label{lemma}
There is a positive constant $C$ such that, for all $t>0$:
\begin{equation}   \label{6}
\sup_{\substack{ 0\leq r\leq t \\ y\in \mathbb{R}^{d}}}
E \left[ \left\| Du(r,y)\right\|  _{\mathcal{H}}| F \right]   \le C\sqrt{ \Phi(t)},
\end{equation}
and
\begin{equation} \sup_{\theta \geq 1}
\sup_{\stackrel{0\leq r\leq t}{y\in \mathbb{R}^d}} E\left[ E'\left(  \|\widetilde{D u(r,y)}\|_{\mathcal{H}} \right) \Big| F
\right] \leq C  \sqrt{\Phi(t)}, \qquad a.s.
\label{7}
 \end{equation}
 \end{lemma}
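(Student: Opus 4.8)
The plan is to treat both estimates by a Gronwall-type argument applied to the $\mathcal H$-norm of the Malliavin derivative, conditioned on $F$. Start from the equation (\ref{3}) for $Du(r,y)$. Taking $\mathcal H$-norms and using the triangle inequality together with Minkowski's inequality for the $\Gamma(t-s,dy)ds$-integral gives
\[
\|Du(r,y)\|_{\mathcal H}\le \|X(r,y)\|_{\mathcal H}+\int_0^r\int_{\mathbb R^d}|b'(u(s,y-z))|\,\|Du(s,y-z)\|_{\mathcal H}\,\Gamma(r-s,dz)\,ds.
\]
Since $b'$ is bounded by some constant $\|b'\|_\infty$, and since $\|X(r,y)\|_{\mathcal H}^2=\Phi(r,r,0)=\Phi(r)\le \Phi(t)$ by the homogeneity and the hypothesis $\Phi(r)=E(X(r,y)^2)$, the first term is bounded by $\sqrt{\Phi(t)}$. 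Now take $E[\;\cdot\;|F]$ on both sides. The key point is that conditioning on $F$ commutes with the deterministic integration in $(s,z)$ (conditional expectation is linear and the integrator $\Gamma$ is deterministic), so if I set
\[
H(r):=\sup_{0\le \rho\le r}\ \sup_{y\in\mathbb R^d}\ E\big[\|Du(\rho,y)\|_{\mathcal H}\,\big|\,F\big],
\]
I obtain
\[
H(r)\le \sqrt{\Phi(t)}+\|b'\|_\infty\int_0^r H(s)\,\Big(\sup_{0\le \sigma\le T}\Gamma(\sigma,\mathbb R^d)\Big)\,ds,
\]
using (\ref{gamma}) to pull out the total mass of $\Gamma$. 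Gronwall's lemma then yields $H(t)\le \sqrt{\Phi(t)}\,e^{C_\Gamma\|b'\|_\infty t}\le C\sqrt{\Phi(t)}$ for $t\in[0,T]$, which is exactly (\ref{6}). One small point to check: $H(r)$ must be finite to start the iteration, but this follows from $u(t,x)\in\mathbb D^{1,2}$ (the Proposition above) plus the $L^2(\Omega)$-bound on $\|Du\|_{\mathcal H}$ that comes from the same Gronwall argument without conditioning.

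For (\ref{7}), the plan is to repeat the argument for the shifted process $\widetilde{Du(r,y)}$ under $E'$ (expectation in the copy $\Omega'$), then under $E[\;\cdot\;|F]$. Applying the shift $(\omega,\omega')\mapsto e^{-\theta}\omega+\sqrt{1-e^{-2\theta}}\,\omega'$ to equation (\ref{3}) — the shift acts as an algebra/linear-operation homomorphism on Wiener functionals and commutes with the deterministic integral — gives
\[
\widetilde{Du(r,y)}=\widetilde{X(r,y)}+\int_0^r\int_{\mathbb R^d}\widetilde{b'(u(s,y-z))}\;\widetilde{Du(s,y-z)}\;\Gamma(r-s,dz)\,ds,
\]
with $|\widetilde{b'(u(s,y-z))}|\le\|b'\|_\infty$ still. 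The crucial observation is that $\widetilde{X(r,y)}=e^{-\theta}X(r,y)+\sqrt{1-e^{-2\theta}}\,X'(r,y)$ is again Gaussian with the same variance $\Phi(r)\le\Phi(t)$ (here $X'$ is the copy of $X$ on $\Omega'$), because $e^{-2\theta}+(1-e^{-2\theta})=1$; hence $E'[\|\widetilde{X(r,y)}\|_{\mathcal H}]\le\sqrt{\Phi(t)}$ uniformly in $\theta\ge 1$ (indeed in $\theta\ge 0$). Now set
\[
\widetilde H(r):=\sup_{\theta\ge 1}\ \sup_{0\le \rho\le r}\ \sup_{y\in\mathbb R^d}\ E\big[E'\big(\|\widetilde{Du(\rho,y)}\|_{\mathcal H}\big)\,\big|\,F\big],
\]
take $E[E'(\cdot)\,|\,F]$ in the shifted equation, pull $\Gamma$'s total mass out using (\ref{gamma}), and conclude $\widetilde H(r)\le\sqrt{\Phi(t)}+C_\Gamma\|b'\|_\infty\int_0^r\widetilde H(s)\,ds$, so Gronwall gives (\ref{7}) with the same kind of constant $C=e^{C_\Gamma\|b'\|_\infty T}$.

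The main obstacle — and the step deserving the most care — is the interchange of the conditional expectation $E[\;\cdot\;|F]$ (and of $E'$) with the space-time integral against $\Gamma(t-s,dy)\,ds$ that builds up the Gronwall inequality: one has to justify a conditional Fubini/Minkowski step for a Hilbert-space-valued pathwise integral, which requires the integrand $(s,z)\mapsto \|b'(u(s,y-z))Du(s,y-z)\|_{\mathcal H}$ to be jointly measurable and conditionally integrable, and then that the resulting bound $H(r)$ (resp. $\widetilde H(r)$) is finite so that Gronwall applies. This is where the reference to the standard construction of the pathwise $\mathcal H$-valued integral (e.g. \cite[p.\ 292]{nq}) and the a priori $\mathbb D^{1,2}$-membership are used. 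Everything else — boundedness of $b'$, the variance identity for $X$ and its shift, finiteness of $\sup_t\Gamma(t,\mathbb R^d)$ — is routine. Note also that no smallness of $T$ is needed for this lemma; the threshold $T_0$ enters only later in Proposition 2 when one needs the \emph{lower} bound on $g_F(F)$, where the perturbation term must be dominated by the leading $\Phi(t)$ term.
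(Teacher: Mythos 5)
Your argument is correct and is essentially the paper's own proof: Minkowski's inequality applied to equation (\ref{3}), conditioning on $F$, pulling out $\sup_{s\le T}\Gamma(s,\mathbb{R}^d)$ via (\ref{gamma}), and a Gronwall iteration on the supremum $Y_t$; the paper writes this out for (\ref{6}) and leaves (\ref{7}) as the analogous shifted computation, exactly as you do. The only cosmetic remark is that the first term $X(t,x)$ in (\ref{3}) is the \emph{deterministic} element of $\mathcal{H}$ representing $X(t,x)$ in the first chaos, so it is invariant under the shift and its $\mathcal{H}$-norm is $\sqrt{\Phi(t)}$ directly — you reach the same bound by a slightly different (probabilistic) reading, which is harmless.
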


\begin{proof}
From (\ref{3}) and applying Minkowski inequality, we get:
\[
\left\| Du(t,y)\right\|  _{H}\leq   \sqrt{ \Phi (t)}+    \left\| b^{\prime }\right\|
_{\infty }\int_{0}^{t}\int_{\mathbb{R}^{d}}\ \left\| Du(s,y-z)\right\|
_{H}\Gamma (t-s,dz)ds.
\]%
As a consequence, we have the following estimate:
\begin{eqnarray*}
E\left[ \left\| Du(t,y)\right\|  _{H}|F\right] & \leq &   \sqrt{\Phi
(t)  }  +   \left\| b^{\prime }\right\|_{\infty }  \\
&&\times \int_{0}^{t}\int_{\mathbb{R}^{d}}\
E\left[ \left\| Du(s,y-z)\right\|  _{H}|F\right] \Gamma
(t-s,dz)ds.
\end{eqnarray*}
Set%
\[
Y_{t}:=\sup_{\substack{ 0\leq r\leq t \\ y\in \mathbb{R}^{d}}}E%
\left[ \left\| Du(r,y)\right\|  _{H}|F\right] .
\]%
Then, we have proved that
\[
Y_{t}\leq \sqrt{  \Phi (t)}+  \left\| b^{\prime }\right\|  _{\infty }\int_{0}^{t}\
Y_{s} \, \Gamma (t-s,\mathbb{R}^{d}) \, ds,
\]%
and a suitable generalisation of Gronwall lemma (e.g. \cite[Lemma 15]{dalang}) allows us to conclude the proof.
\end{proof}

\medskip

 \begin{proof}[Proof of Proposition  \ref{g}]
It follows the same lines as in the proofs of Propositions 4.5 and 5.5 in \cite{nq2}.
Indeed, one just needs to be careful and try to keep the sharpest bounds which appear throughout the proof.
Let us be more precise about this method, namely, by (\ref{3}) and (\ref{99}), we have that
$$g_F(F)= \Phi(t)+ \sum_{i=1}^3 A_i(t,x),$$
where
\begin{eqnarray*}
  A_1(t,x) &  =&   E\left[
  \int_0^t \int_{\mathbb{R}^d}
 b'(u(s,x-y)) \langle X(t,x), D u(s,x-y) \rangle_\mathcal{H}
 \Gamma(t-s,dy) ds    \Big| F\right] \\
A_2(t,x)& = &  \int_0^\infty e^{-\theta} E\left[ E'\left(
   \int_0^t \int_{\mathbb{R}^d} b'(\widetilde{u(s,x-y)}) \right. \right.  \\
& & \quad \times \left. \left. \langle  X(t,x)  , \widetilde{D u(s,x-y)} \rangle_\mathcal{H}   \Gamma(t-s,dy)
ds    \right) \Big| F\right] d\theta,  \\
A_3(t,x) &= &  \int_0^\infty e^{-\theta} E\Bigg[ E'\Bigg(    \int_{[0,t]^2}
 \int_{\mathbb{R}^{2d}}  b'(u(s,x-y)) b'(\widetilde{u(r,x-z)}) \\
&& \times    \langle Du(s,x-y) ,   \widetilde{D u(r,x-z)} \rangle_\mathcal{H}
 \Gamma(t-s,dy) \Gamma(t-r,dz)  dr ds    \Bigg) \Bigg| F\Bigg] d\theta.
\end{eqnarray*}
Let us first prove the lower bound. For this, observe that we have
$$g_F(F)\geq   \Phi(t)-|A_1(t,x)+A_2(t,x)+A_3(t,x)|.$$
Let us bound $|A_1(t,x)|$. We will use the following notation:
\begin{equation}
\Psi(t):= \int_0^t \Gamma(s,\mathbb{R}^d) ds.
\label{8}
\end{equation}
Notice that, by hypothesis, $\sup_{t\in [0,T]} \Psi(t)<+\infty$ and in fact $\Psi(t)$ converges
to $0$ as $t$ tends to $0$. We can write, applying (\ref{6}) in Lemma \ref{lemma} and using the notation (\ref{8}):
 \begin{eqnarray*}
\left| A_{1}(t,x)\right|  &\leq &\sqrt{\Phi (t)}\ \left\| b^{\prime }\right\|
_{\infty }\mathbb{E}\ \left[ \int_{0}^{t}\int_{\mathbb{R}^{d}}\left\|
Du(s,x-y)\right\|_{\mathcal{H}}\Gamma (t-s,dy)ds \Big| F\right]  \\
&\leq &\sqrt{\Phi (t)}\ \left\| b^{\prime }\right\| _{\infty }\left(
\int_{0}^{t}\Gamma (s,\mathbb{R}^{d})ds\right) \sup_{\substack{ 0\leq s\leq t
\\ z\in \mathbb{R}^{d}}}\mathbb{E}\left[ \left\| Du(s,z)\right\|
_{\mathcal{H}}|F\right]  \\
&\leq &C\ \Phi (t)\ \left\| b^{\prime }\right\| _{\infty }\left(
\int_{0}^{t}\Gamma (s,\mathbb{R}^{d})ds\right) \\
&=&  C\left\| b^{\prime }\right\| _{\infty }  \Phi(t) \Psi(t).
\end{eqnarray*}
For the analysis of the term $|A_2(t,x)|$, one can proceed using exactly the same arguments (the expectation $E'$ and
the integral with respect to $\theta$ do not affect the final result), but here we will need to apply (\ref{7}) in
Lemma \ref{lemma}. Indeed, one also obtains that
$$|A_2(t,x)| \leq  C \|b'\|_\infty \Phi(t) \Psi(t).$$
Eventually, in order to bound $|A_3(t,x)|$,  we observe that
\begin{eqnarray*}
\left| A_{3}(t,x)\right|  &\leq & \|b'\|^2_\infty  \int_0^\infty e^{-\theta} \int_{[0,t]^2} \int_{\mathbb{R}^{2d}} \\
& &  \quad \times E\left[ \|D u(s,x-y)\|_{\mathcal{H}}  E'\left( \| \widetilde{D u(r,x-z)} \|_{\mathcal{H}} \right) \Big| F\right] \\
& & \quad \times \Gamma(t-s,dy) \Gamma(t-r,dz) dr ds d\theta.
\end{eqnarray*}
for which we apply Cauchy-Schwartz inequality  with respect to  the conditional expectation with respect to $F$ and we
use the estimate
\[
\sup_{\theta \geq 1}
\sup_{\stackrel{0\leq r\leq t}{y\in \mathbb{R}^d}} E\left[ E'\left(  \|\widetilde{D u(r,y)}\|^2_{\mathcal{H}}  \; dr \right) \Big| F
\right] \leq C  \Phi(t) , \qquad a.s.,
 \]
whose proof is similar to that of (\ref{7}). In this way, we obtain that
 \[
  |A_3(t,x)| \leq   C  \|b'\|^2_\infty  \Phi(t)\Psi(t)^2 .
\]
Putting together the bounds for $|A_i(t,x)|$, $i=1,2,3$, we have that there are positive  constants $k_1$ and $k_2$
(only depending on   $b$) such that, for all $t\in (0,T]$ and $x\in \mathbb{R}^d$ (recall that $F=u(t,x)-E(u(t,x))$),
\begin{align*}
g_F(F) & \geq  k_1 \Phi(t) - k_2\left( \Phi(t)\Psi(t)+\Phi(t) \Psi(t)^2\right)\\
& =  \Phi(t) \left( k_1 - k_2\left( \Psi(t)+\Psi(t)^2\right)\right),
\end{align*}
where we remind that $\Psi(t)=\int_0^t \Gamma(s,\mathbb{R}^d) ds$. Hence, if $t$ is sufficiently small, say $t<T_0$ with $T_0>0$ satisfying
$k_1 - k_2\left( \Psi(T_0)+\Psi(T_0)^2\right)>0$, then
$$g(F)\geq C_1 \Phi(t),$$
where $C_1$ is a positive constant depending on $b$  and $T_0$. This proves the lower bound in the statement.

Concerning the upper bound, observe that we have, for all $t\in (0,T_0)$:
\begin{align*}
g_F(F) & \leq  \Phi(t) +\sum_{i=1}^3 |A_i(t,x)| \\
& \leq C \Phi(t) \left( 1 + \Psi(T_0)+\Psi(T_0)^2\right) \\
& \leq C_2 \Phi(t),
\end{align*}
where $C_2$ is a positive constant depending on $b$ and $T_0$. This concludes the proof.
\end{proof}


\section{Application to spatially homogeneous SPDEs}
\label{appl-spde}

As we commented in the Introduction, the main examples of equations of the form (\ref{2}) which Theorem \ref{gaussian} can be applied to correspond to
the mild formulation of the following SPDEs:
\begin{equation} \label{spde}
L u(t,x) =  b(u(t,x))+ \sigma \dot{W}(t,x), \qquad (t,x)\in [0,T]\times \mathbb{R}^d,
\end{equation}
where $L$ is a second order differential operator with constant coefficients and $\dot W$ is a Gaussian noise which is white in time and has some
spatially homogeneous correlation. More precisely, on some probability space
$(\Omega, \mathcal{F},P)$, we consider a family of centered Gaussian random variables $W = \{W(\varphi),\, \varphi \in
\mathcal{C}_0^\infty (\mathbb{R}^{d+1})\}$, where $\mathcal{C}_0^\infty (\mathbb{R}^{d+1})$ denotes the space of
infinitely differentiable functions with compact support, with the following covariance functional:
\begin{equation}
E(W(\varphi) W(\psi)) = \int_0^\infty \int_{\mathbb{R}^d} \left( \varphi(t)*{\psi_{(s)}}(t)\right) (x) \, \Lambda(dx)
dt, \quad \varphi, \psi\in \mathcal{C}^\infty_0(\mathbb{R}^{d+1}), \label{25}
\end{equation}
where $\psi_{(s)}(t,x):=\psi(t,-x)$ and $\Lambda$ is a non-negative and non-negative definite tempered measure.
For the right-hand side of (\ref{25}) to define a covariance functional, it turns out that
$\Lambda$ has to be symmetric and the Fourier transform of a non-negative tempered measure $\mu$ (see
\cite[Chapter VII, Th\'eor\`eme XVII]{sch}). Usually, $\mu$ is called the spectral measure of the noise $W$.
Let us denote by $(\mathcal{F}_t)_t$ the filtration generated by $W$, conveniently completed.

Usual examples of spatial correlations are given by measures of the form
$\Lambda(dx)=f(x)dx$, where $f$ is a non-negative and continuous
function on $\mathbb{R}^{d}\setminus \{0\}$, which is integrable in a
neighbourhood of $0$. For instance, one can consider a Riesz
kernel $f(x)=|x|^{-\epsilon}$, for $0<\epsilon<d$, while the
space-time white noise corresponds to consider $f=\delta_0$; in this latter case, the spectral measure
is the Lebesgue measure on $\mathbb{R}^d$.

By definition, a mild solution of (\ref{spde}) is a $\mathcal{F}_t$-adapted stochastic process
$\{u(t,x), \; (t,x)\in [0,T]\times \mathbb{R}^d\}$ satisfying
\begin{eqnarray}\label{eq:mild}
u(t,x) & =& \sigma \int_0^t \int_{\mathbb{R}^{d}} \Gamma(t-s,x-y) W(ds,dy) \nonumber\\
& & \qquad + \int_0^t \int_{\mathbb{R}^{d}} b(u(s,x-y)) \Gamma(t-s,dy) ds,
\end{eqnarray}
a.s. for all $(t,x)\in [0,T]\times \mathbb{R}^{d}$, where $\Gamma$ denotes the fundamental solution associated to $L$. 
Without any loose of generality, we can assume that $\sigma=1$. Let
us point out that the stochastic integral in the right-hand side of equation (\ref{eq:mild}) takes values in
$\mathbb{R}$ and is considered as an integral with respect to the cylindrical Wiener process associated to $W$, as
described in \cite[Section 3]{nq}. Note, however, that this stochastic integral can be also defined in the sense of
Dalang \cite{dalang} (see also \cite{walsh}) or even in the more abstract framework of Da Prato and Zabczyk \cite{dz}.

In this setting, the process $X$ is given by
\[
 X(t,x)= \int_0^t \int_{\mathbb{R}^{d}} \Gamma(t-s,x-y) W(ds,dy),
\]
for which one verifies that
\[
E(X(t,x)X(s,y))= \int_0^{s\land t} \int_{\mathbb{R}^d} e^{2\pi i (x-y)} \mathcal{F} \Gamma(t-r)(\xi)
\overline{\mathcal{F} \Gamma(s-r)(\xi)}\, \mu(d\xi) dr.
\]
Hence, in particular,
\[
 \Phi(t)= E(X(t,x)^2)= \int_0^t \int_{\mathbb{R}^d} | \mathcal{F} \Gamma(r)(\xi)|^2 \, \mu(d\xi) dr, \qquad t\in [0,T].
\]
At this point, we observe that $X(t,x)$ is a well-defined Gaussian random variable whenever the following condition is
satisfied (see e.g. \cite[Lemma 3.2]{nq}):
\begin{hypothesis}\label{h}
For all $t>0$, $\Gamma(t)$ defines a non-negative distribution with rapid decrease such that
\[
\int_0^T \int_{\mathbb{R}^{d}} |\mathcal{F} \Gamma(t)(\xi)|^2 \mu(d\xi) dt< \infty.
\]
Moreover, $\Gamma$ is a non-negative measure of the form $\Gamma(t, dx)dt$ such that, for all $T>0$,
$$\sup_{0\leq t\leq T} \Gamma(t,\mathbb{R}^{d})\leq C_T<\infty.$$
\end{hypothesis}
Thus, Theorem \ref{gaussian} applied to equation (\ref{eq:mild}) reads:
\begin{theorem}\label{gaussian-spde}
Assume that Hypothesis \ref{h} is satisfied and $b$ is of class $\mathcal{C}^1$ with a bounded derivative. Then, there
exists $T_0>0$ such that for all  $(t,x)\in (0,T_0)\times
\mathbb{R}^d$, the random variable $u(t,x)$ solution to (\ref{eq:mild}) has a density $p$ satisfying, for almost every
$z\in \mathbb{R}$:
\[
\frac{E\left| u(t,x)-m\right|}{ C_2 \Phi(t)} \exp\left\{ -\frac{(z-m)^2}{C_1 \Phi(t) } \right\}\leq p(z) \leq
\frac{E\left| u(t,x)-m\right|}{ C_1 \Phi(t)} \exp\left\{ -\frac{(z-m)^2}{C_2 \Phi(t)}\right\},
\]
where $m=E(u(t,x))$ and the constants $C_1, C_2$ are positive and only depend on $b$  and $T_0$.
\end{theorem}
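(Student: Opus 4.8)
The plan is to deduce Theorem \ref{gaussian-spde} as an immediate consequence of Theorem \ref{gaussian}: the only work is to recognise the mild equation (\ref{eq:mild}) (with $\sigma=1$) as a particular instance of the abstract equation (\ref{2}) whose ingredients satisfy the assumptions imposed in the Introduction.

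First I would take the driving process to be $X(t,x)=\int_0^t\int_{\mathbb{R}^d}\Gamma(t-s,x-y)\,W(ds,dy)$ and the kernel to be the measure $\Gamma(t-s,dy)\,ds$. With this identification the second term of (\ref{eq:mild}) coincides verbatim with the drift term of (\ref{2}), so (\ref{eq:mild}) \emph{is} (\ref{2}); since $b\in\mathcal{C}^1$ with bounded derivative is Lipschitz, there is a unique solution, which is homogeneous in space by the remark following (\ref{2}). Under Hypothesis \ref{h}, $X(t,x)$ is a well-defined centered Gaussian variable for every $(t,x)$ (cf. \cite[Lemma 3.2]{nq}), and the covariance formula displayed before the statement shows that $E(X(t,x)X(s,y))=\Phi(t,s,x-y)$ depends only on $x-y$, that $\Phi(t,0,x)=0$ (the $r$-integral over $\{0\}$ vanishes), and that $\Phi(t):=\Phi(t,t,0)=E(X(t,x)^2)=\int_0^t\int_{\mathbb{R}^d}|\mathcal{F}\Gamma(r)(\xi)|^2\,\mu(d\xi)\,dr$.

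Next I would verify the two analytic conditions on $\Phi$ left open. Writing $\Phi(t)=\int_0^t h(r)\,dr$ with $h(r)=\int_{\mathbb{R}^d}|\mathcal{F}\Gamma(r)(\xi)|^2\,\mu(d\xi)\geq 0$, the integrability part of Hypothesis \ref{h} gives $h\in L^1([0,T])$, whence $t\mapsto\Phi(t)$ is absolutely continuous, in particular continuous. For the strict positivity $\Phi(t)>0$ when $t>0$, I would argue that for $r>0$ the fundamental solution $\Gamma(r)$ is a non-trivial non-negative distribution (of finite total mass by Hypothesis \ref{h}), so that $\mathcal{F}\Gamma(r)$ is continuous and not identically zero, and, $\mu$ being a non-trivial tempered measure, $h(r)>0$ on a subset of $(0,t)$ of positive Lebesgue measure; hence $\Phi(t)>0$. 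The last requirement of Theorem \ref{gaussian}, condition (\ref{gamma}) for $\Gamma$, is precisely the concluding assertion of Hypothesis \ref{h}.

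Once all hypotheses are matched, Theorem \ref{gaussian} applies directly and yields the two-sided Gaussian bound in the statement, with $m=E(u(t,x))$, $\Phi(t)=E(X(t,x)^2)$, and $C_1,C_2$ depending only on $b$ and $T_0$. I do not expect a genuine obstacle here; the only point needing some care is the strict positivity $\Phi(t)>0$, which is not contained in Hypothesis \ref{h} itself and must be extracted from the non-degeneracy of the fundamental solution $\Gamma$ and of the spectral measure $\mu$ (and is in any case transparent in the motivating heat and wave equations, for which one checks $\Phi(t)>0$ directly).
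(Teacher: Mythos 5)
Your proposal is correct and follows essentially the same route as the paper: the paper likewise presents Theorem \ref{gaussian-spde} simply as Theorem \ref{gaussian} applied to (\ref{eq:mild}), after identifying $X(t,x)$ as the stochastic integral term, computing its covariance and $\Phi(t)=\int_0^t\int_{\mathbb{R}^d}|\mathcal{F}\Gamma(r)(\xi)|^2\,\mu(d\xi)\,dr$, and noting that Hypothesis \ref{h} guarantees both the well-definedness of $X$ and condition (\ref{gamma}). Your extra remarks on the continuity and strict positivity of $\Phi$ (which the paper leaves implicit, as these were assumed in the Introduction) are a reasonable, slightly more careful addition but do not change the argument.
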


\begin{example}
Theorem \ref{gaussian-spde} applies to the case of the stochastic heat equation in any space dimension and the
stochastic wave equation in $\mathbb{R}^d$ with $d\leq 3$. Indeed, in both examples, Hypothesis \ref{h} is fulfilled if
and only if
\[
\int_{\mathbb{R}^d} \frac{1}{1+|\xi|^2} \, \mu(d\xi) <+\infty
\]
(see e.g. \cite[Section 3]{dalang}). As a consequence, in the case of the stochastic wave equation, Theorem \ref{gaussian-spde} exhibits an improvement of 
\cite[Theorem 5.3]{nq2}, where the lower and upper Gaussian bounds were not optimal and a slightly
stronger condition on $\mu$ was assumed.
\end{example}

\begin{remark}
Independently of the results in \cite{nv}, the study of the existence and smoothness of the density for SPDEs of the form (\ref{spde}) 
(in particular for stochastic heat and wave equations), has already been tackled by several authors. Namely, let us mention the works
\cite{cn,MS,mms,QS,QS2,nq}.

\end{remark}

\end{document}